\definecolor{Darkblue}{rgb}{0,0,0.4}
\definecolor{Brown}{cmyk}{0,0.61,1.,0.60}
\definecolor{Purple}{cmyk}{0.45,0.86,0,0}
\definecolor{Darkgreen}{rgb}{0.133,0.543,0.133}
\newcommand{\OO}{\mathcal{O}}
\newcommand{\flow}{\texttt{flow}}
\newcommand{\bag}{\texttt{bag}}
\newcommand{\torso}{\texttt{torso}}
\newcommand{\adh}{\texttt{adh}}
\theoremstyle{definition}
\newtheorem{theorem}{Theorem}
\newtheorem{lemma}[theorem]{Lemma}
\newtheorem{claim}[theorem]{Claim}
\title{On Induced Versions of Menger's Theorem on Sparse Graphs}
\author{
Peter Gartland\thanks{University of California, Santa Barbara, USA. \texttt{petergartland@ucsb.edu}.}
\and
Tuukka Korhonen\thanks{Department of Informatics, University of Bergen, Norway. \texttt{tuukka.korhonen@uib.no}. This work was supported by the Research Council of Norway via the project BWCA (grant no. 314528).}
\and
Daniel Lokshtanov\thanks{University of California, Santa Barbara, USA. \texttt{ daniello@ucsb.edu}.}
}
\date{}
\begin{document}
\maketitle

\begin{abstract}
Let $A$ and $B$ be sets of vertices in a graph $G$.
Menger's theorem states that for every positive integer $k$, either there exists a collection of $k$ vertex-disjoint paths between $A$ and $B$, or $A$ can be separated from $B$ by a set of at most $k-1$ vertices.
Let $\Delta$ be the maximum degree of $G$.
We show that there exists a function $f(\Delta) = (\Delta+1)^{\Delta^2+1}$, so that for every positive integer $k$, either there exists a collection of $k$ vertex-disjoint and pairwise anticomplete paths between $A$ and $B$, or $A$ can be separated from $B$ by a set of at most $k \cdot f(\Delta)$ vertices.
We also show that the result can be generalized from bounded-degree graphs to graphs excluding a topological minor.
On the negative side, we show that no such relation holds on graphs that have degeneracy 2 and arbitrarily large girth, even when $k = 2$. 
Similar results were obtained independently and concurrently by Hendrey, Norin, Steiner, and Turcotte~[arXiv:2309.07905].
\end{abstract}

\section{Introduction}
The classic theorem of Menger~\cite{menger1927allgemeinen} gives a tight duality between separators and collections of vertex-disjoint paths in graphs: The only reason one cannot route a collection of $k$ vertex-disjoint paths between two sets of vertices $A$ and $B$ in a graph is that $A$ can be separated from $B$ by a set of less than $k$ vertices.
The study of induced substructures of graphs (i.e., induced subgraphs, induced subdivisions, and induced minors) motivates the question of an induced version of Menger's theorem: Does the inability to route a collection of $k$ vertex-disjoint and pairwise anticomplete\footnote{We say that two paths $P_i$ and $P_j$ are \emph{anticomplete} if there are no edges between the vertices of $P_i$ and the vertices of $P_j$.} paths between sets of vertices $A$ and $B$ imply the existence of some kind of an $(A,B)$-separator?

Because the problem of finding two vertex-disjoint and anticomplete paths between given sets $A$ and $B$ is NP-complete~\cite{DBLP:journals/dm/Bienstock91} (even on graphs of degree at most $3$~\cite{DBLP:journals/endm/LevequeLMT07}), a tight characterization similar to Menger's theorem appears unlikely.
Despite this, it would be interesting to find approximate reasons for the inability to route such paths.
Recently, Albrechtsen, Huynh, Jacobs, Knappe, and Wollan~\cite{DBLP:journals/corr/abs-2305-04721}, and independently, Georgakopoulos and Papasoglu~\cite{georgakopoulos2023graph} showed that if one cannot route two vertex-disjoint and anticomplete paths between $A$ and $B$, then $A$ can be separated from $B$ by a separator of bounded radius.
They also conjectured that this could be generalized to $k$ paths and separators that are a union of less than $k$ sets of bounded radius.

In this paper, we take a different approach to an induced version of Menger's theorem.
We investigate the question on which graph classes does the inability to route $k$ vertex-disjoint pairwise anticomplete paths between $A$ and $B$ imply the existence of an $(A,B)$-separator of size at most $f(k)$, for some function $f$.
Results similar to ours were obtained independently and concurrently by Hendrey, Norin, Steiner, and Turcotte~\cite{DBLP:journals/corr/abs-2309-07905}.

Let us then state our results.
We start with graphs of bounded degree.

\begin{theorem}
\label{the:degree}
Let $G$ be a graph of maximum degree $\Delta$ and $A,B \subseteq V(G)$.
There exists a function $f(\Delta) = (\Delta+1)^{\Delta^2+1}$ so that for every positive integer $k$, there either exists a collection of $k$ vertex-disjoint and pairwise anticomplete paths between $A$ and $B$, or $A$ can be separated from $B$ by a set of at most $k \cdot f(\Delta)$ vertices.
\end{theorem}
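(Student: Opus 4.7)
I would prove the theorem by induction on $k$. The base case $k = 1$ is classical Menger's theorem: either there is an $(A, B)$-path, or the empty set separates $A$ from $B$.

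For the inductive step, assume the statement holds for $k - 1$ and suppose $G$ admits no $(A, B)$-separator of size at most $k \cdot f(\Delta)$; otherwise we are done. By classical Menger's theorem, $G$ contains a collection $\mathcal{P}$ of $M := k \cdot f(\Delta) + 1$ pairwise vertex-disjoint $(A, B)$-paths. The goal is to produce $k$ pairwise anticomplete $(A, B)$-paths.

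My intended reduction is to find a single $(A, B)$-path $P$ (not necessarily from $\mathcal{P}$) whose closed neighborhood $N[P]$ intersects at most $f(\Delta)$ of the paths of $\mathcal{P}$. Then the $\geq (k - 1) \cdot f(\Delta)$ paths of $\mathcal{P}$ that avoid $N[P]$ survive intact in $G' := G - N[P]$, certifying via Menger that $G'$ has min $(A, B)$-separator strictly greater than $(k - 1) \cdot f(\Delta)$. Applying the inductive hypothesis to $G'$ yields $k - 1$ pairwise anticomplete $(A, B)$-paths in $G'$; since these paths avoid $N[P]$ by construction, they are also anticomplete to $P$, and together with $P$ form the desired $k$-family in $G$. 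A naive version of this plan that instead demands $|N[P]| \leq f(\Delta)$ cannot work, because $(A, B)$-paths can be arbitrarily long (consider many long vertex-disjoint parallel $(A, B)$-paths): the correct notion of ``thinness'' is intersection with the Menger collection, not raw size.

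The crux is therefore the following key lemma: given a collection $\mathcal{P}$ of vertex-disjoint $(A, B)$-paths in a max-degree-$\Delta$ graph, one can find an $(A, B)$-path whose closed neighborhood meets at most $f(\Delta)$ paths of $\mathcal{P}$. The exponent $\Delta^2 + 1$ strongly suggests exploiting balls of radius $\Delta^2$: in a max-degree-$\Delta$ graph such a ball has at most $\sum_{i = 0}^{\Delta^2} \Delta^i \leq (\Delta + 1)^{\Delta^2 + 1} = f(\Delta)$ vertices, and hence can be met by at most $f(\Delta)$ members of any vertex-disjoint collection. My plan is to argue -- via a pigeonhole / extremal analysis on how paths of $\mathcal{P}$ ``fan out'' across overlapping balls of radius $\Delta^2$ -- that either some short $(A, B)$-path can be routed inside a single such ball (giving the required thin $P$ immediately), or else an iterated argument over $O(\Delta^2)$ layers of BFS-style balls produces either the thin $P$ or a small $(A, B)$-separator at some intermediate layer.

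\textbf{Main obstacle.} The technical crux is proving the key lemma with the precise constant $(\Delta + 1)^{\Delta^2 + 1}$. The difficulty is that a globally ``short'' $(A, B)$-path need not exist, so one must instead construct a path by stitching together short segments within overlapping balls of radius $\Delta^2$ while controlling, at each step, the number of paths of $\mathcal{P}$ accumulated in the neighborhood so far. Carrying out this extremal pigeonhole argument, and absorbing the blow-up in the constant across the $\Delta^2$ nesting levels, is where I expect the main combinatorial work to lie.
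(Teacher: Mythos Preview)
Your inductive wrapper is fine: if you can always find an $(A,B)$-path $P$ with $N[P]$ meeting at most $f(\Delta)$ members of a given Menger family $\mathcal{P}$, then removing $N[P]$ and recursing works exactly as you describe. But the key lemma is the entire content of the theorem, and your sketch for it does not go through.

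The specific concern is the provenance of the exponent $\Delta^2+1$. You read it as a ball radius, but a path $P$ can be arbitrarily long, so bounding $|N[P]|$ by the size of a single radius-$\Delta^2$ ball is impossible, and you acknowledge this. Your fallback---stitching short segments across overlapping balls while controlling how many members of $\mathcal{P}$ accumulate in the neighbourhood---has no mechanism stated for why the count stays bounded. Each new segment can introduce $\Theta(\Delta)$ fresh adjacencies to members of $\mathcal{P}$, and you may need unboundedly many segments; nothing in the sketch prevents the total from blowing up. The ``pigeonhole over $O(\Delta^2)$ BFS layers'' idea is not an argument: there is no object being pigeonholed and no layer structure that separates $A$ from $B$ in a useful way. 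As written, this is a hope, not a plan.

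For contrast, the paper does not try to find a thin path at all. It instead \emph{sparsifies} the whole graph: it observes that the square of $G$ has maximum degree~$\Delta^2$, so $V(G)\setminus N[A\cup B]$ can be partitioned into $\Delta^2+1$ distance-$3$ independent sets $I_1,\ldots,I_{\Delta^2+1}$. For each $I_j$ in turn, one contracts the closed neighbourhoods $N[v]$ for $v\in I_j$ (these are pairwise disjoint), takes a Menger family in the contracted graph, lifts it back, and deletes from each $N[v]$ all but the at most two neighbours of $v$ used by the lifted family. One round costs a factor $(\Delta+1)$ in flow and forces every surviving vertex of $I_j$ to have degree~$\le 2$; after $\Delta^2+1$ rounds the induced subgraph has maximum degree~$2$ away from $N[A\cup B]$, and a short preprocessing of $A\cup B$ makes it max-degree-$2$ everywhere. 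In such a graph vertex-disjoint paths are automatically anticomplete. So the exponent $\Delta^2+1$ is the number of colour classes in a proper colouring of $G^2$, not a ball radius; your reading of the constant is pointing you toward the wrong argument.
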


The proof of \Cref{the:degree} is similar to and inspired by a proof of a similar result on treewidth and grid induced minors on bounded-degree graphs~\cite{DBLP:journals/jctb/Korhonen23}.
It remains an interesting open question on how much the upper bound on $f(\Delta)$ could be improved.
We are not aware of any non-trivial lower bounds on it, in particular, to the best of our knowledge it could hold that $f(\Delta) = \OO(\Delta)$.

A graph $G$ contains a graph $H$ as a \emph{minor} if $H$ can be obtained from $G$ by vertex deletions, edge deletions, and edge contractions.
We show that a result similar to \Cref{the:degree} also holds for graphs excluding a minor.

\begin{theorem}
\label{the:minor}
Let $G$ be a graph that excludes a graph $H$ as a minor and $A,B \subseteq V(G)$.
There exists a function $g(H) = \OO(|V(H)| \cdot \sqrt{\log |V(H)|})$ so that for every positive integer $k$, there either exists a collection of $k$ vertex-disjoint and pairwise anticomplete paths between $A$ and $B$, or $A$ can be separated from $B$ by a set of at most $k \cdot g(H)$ vertices.
\end{theorem}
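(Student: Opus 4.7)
The plan is to reduce to a standard consequence of the Kostochka--Thomason theorem: $H$-minor-free graphs have bounded average degree. First I would invoke (the easy direction of) Menger's theorem: if $A$ cannot be separated from $B$ by fewer than $k \cdot g(H)$ vertices, then $G$ contains a family $\mathcal{P} = \{P_1, \ldots, P_N\}$ of $N = k \cdot g(H)$ internally vertex-disjoint $A$-$B$ paths. The goal is to select $k$ of these paths that are pairwise anticomplete.

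To do the selection, I would form the auxiliary ``conflict graph'' $F$ on vertex set $[N]$, where $ij \in E(F)$ iff there is an edge of $G$ between $V(P_i)$ and $V(P_j)$. By construction, a set $I \subseteq [N]$ is independent in $F$ if and only if the corresponding subfamily $\{P_i : i \in I\}$ is pairwise anticomplete in $G$, so it suffices to find an independent set of size $k$ in $F$. The crucial observation is that $F$ is a \emph{minor} of $G$: delete from $G$ every vertex outside $\bigcup_i V(P_i)$, then contract each connected path $P_i$ to a single vertex; the edges surviving between these contracted vertices are precisely the edges of $F$ (after suppressing multi-edges). Hence $F$ is $H$-minor-free, because $H$-minor-freeness is closed under taking minors.

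Now I would apply the Kostochka--Thomason bound: any $K_t$-minor-free graph has average degree at most $c \cdot t \sqrt{\log t}$ for an absolute constant $c$. Since $H$ is a subgraph of $K_{|V(H)|}$, any graph containing $K_{|V(H)|}$ as a minor also contains $H$ as a minor; therefore $F$, being $H$-minor-free, is $K_{|V(H)|}$-minor-free and has average degree at most $d := c \cdot |V(H)| \sqrt{\log |V(H)|}$. A greedy (Caro--Wei / Turán-type) argument then guarantees an independent set of size at least $|V(F)|/(d+1) = N/(d+1)$ in $F$. Choosing $g(H) := d+1 = \OO(|V(H)| \sqrt{\log |V(H)|})$ makes this quantity at least $k$, which yields the desired $k$ pairwise anticomplete vertex-disjoint $A$-$B$ paths.

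I do not expect a serious obstacle in this argument; the only mild subtlety is verifying that the minor operation really produces $F$ and not merely a graph containing $F$ as a subgraph (which is immediate because contracting a connected subgraph preserves precisely the edges whose endpoints lie in distinct contracted parts). If one wanted to sharpen the constants, the main effort would go into invoking a refined version of Kostochka--Thomason, but this does not affect the asymptotic form of $g(H)$.
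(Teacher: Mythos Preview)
Your proposal is correct and essentially identical to the paper's proof: take a maximum family of Menger paths, form the conflict graph $F$, observe that $F$ is a minor of $G$ and hence $H$-minor-free, apply Kostochka--Thomason to bound its average degree, and extract an independent set of size $k$ via a Tur\'an-type bound. One small quibble: you want the Menger paths to be fully vertex-disjoint rather than merely \emph{internally} vertex-disjoint, so that the branch sets for the contraction are genuinely disjoint and $F$ is obtained as a minor; the version of Menger used in the paper (where separators may meet $A\cup B$) gives exactly this.
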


The proof of \Cref{the:minor} follows quite easily from the fact that $H$-minor-free graphs have average degree $\OO(|V(H)| \cdot \sqrt{\log |V(H)|})$~\cite{DBLP:journals/combinatorica/Kostochka84,thomason_1984}.

A graph $G$ contains a graph $H$ as a \emph{topological minor} if $H$ can be obtained from $G$ by vertex deletions, edge deletions, and contractions of edges incident to degree-2 vertices.
Note that topological-minor-free graphs generalize both bounded-degree graphs and minor-free graphs, and therefore the following theorem generalizes both \Cref{the:degree,the:minor}.

\begin{theorem}
\label{the:topminor}
Let $G$ be a graph that excludes a graph $H$ as a topological minor and $A,B \subseteq V(G)$.
There exists a function $h(H)$ so that for every positive integer $k$, there either exists a collection of $k$ vertex-disjoint and pairwise anticomplete paths between $A$ and $B$, or $A$ can be separated from $B$ by a set of at most $k \cdot h(H)$ vertices.
\end{theorem}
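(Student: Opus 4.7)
The plan is to derive Theorem~\ref{the:topminor} from Theorems~\ref{the:degree} and \ref{the:minor} by invoking the Grohe--Marx structure theorem for topological-minor-free graphs. That theorem provides, for any $H$-topological-minor-free graph $G$, a tree decomposition $(T, \bag)$ of bounded adhesion $\alpha = \alpha(H)$ such that every torso $\torso(t)$ has a set $A_t$ of at most $\alpha$ ``apex'' vertices with $\torso(t) - A_t$ either $K_s$-minor-free for some $s = s(H)$, or of maximum degree at most $\Delta(H)$. In either case, Theorem~\ref{the:degree} or Theorem~\ref{the:minor} applies to $\torso(t) - A_t$, yielding a local induced-Menger guarantee; augmenting the produced separator by the small apex set $A_t$ absorbs the apex vertices at a bounded additive cost per torso.

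The global argument I envisage proceeds by a leaf-peeling induction on $|V(T)|$. Suppose no $k$ pairwise anticomplete $(A,B)$-paths exist in $G$, and pick a leaf $\ell$ of $T$ with adhesion $S = \bag(\ell) \cap \bag(\mathrm{parent}(\ell))$, so $|S| \le \alpha$. Apply the appropriate one of Theorems~\ref{the:degree} and~\ref{the:minor} inside $\torso(\ell) - A_\ell$, with source $A \cap \bag(\ell)$ and target $(B \cap \bag(\ell)) \cup S$. If a small separator $X_\ell$ is returned, absorb $A_\ell \cup X_\ell$ into a running global separator, remove $\bag(\ell) \setminus S$ from $G$, and recurse on the smaller decomposition $T - \ell$. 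If instead $k$ anticomplete leaf-local paths are returned, those ending in $B \cap \bag(\ell)$ are already global solutions, while those ending in $S$ must be extended through the remainder of the decomposition by the induction hypothesis with $S$ playing the role of a partial source.

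The central technical obstacle will be the path-extension step: the local anticomplete paths produced inside the leaf torso must splice with anticomplete paths found in the rest of $T$ without creating forbidden adjacencies across the adhesion $S$. Since $|S| \le \alpha$, only boundedly many paths can traverse any single adhesion, so maintaining the pairwise-anticomplete property across the splice reduces to a bounded-size matching-style bookkeeping controlled entirely by $H$. The accounting must be set up so that the additive $O(\alpha)$ overheads from adhesions and apex sets accumulate only over tree nodes that are ``active,'' i.e.\ meet $A$, $B$, or one of the desired anticomplete paths; classical Menger applied to $G$ caps the number of active nodes in the relevant sense by $O(k)$. Combining this with the local constants of Theorems~\ref{the:degree} and \ref{the:minor} should then yield a bound $h(H)$ that is polynomial in $\alpha(H)$, $s(H)$, $\Delta(H)$, and in the functions $f$ and $g$ of the two earlier theorems.
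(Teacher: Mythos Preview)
Your use of the Grohe--Marx structure theorem and the idea of applying Theorems~\ref{the:degree} and~\ref{the:minor} locally inside torsos are both correct and match the paper's strategy. However, the leaf-peeling induction has a genuine accounting gap. In your scheme, every time a leaf $\ell$ is processed and the local application returns a separator rather than $k$ anticomplete paths, you add a set of size $\Theta(k \cdot c(H)) + \alpha$ to the running global separator. The number of leaves processed this way is not bounded by any function of $k$ and $H$: a tree decomposition can have arbitrarily many leaf bags, each containing vertices of $A$ (say), and for each such leaf you will be in the ``small separator'' branch because the target $(B \cap \bag(\ell)) \cup S$ has size at most $\alpha$. Your claim that ``classical Menger applied to $G$ caps the number of active nodes by $O(k)$'' is the crux of the whole argument, and it is neither stated precisely nor justified; as written it is false, since the number of bags meeting $A \cup B$ is unconstrained.

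The paper circumvents this by a different global reduction. Rather than peeling leaves, it repeatedly deletes an adhesion $\adh(xy) \cap C$ from a component $C$ \emph{only when} this strictly increases the number of connected components containing vertices of both $A$ and $B$. Hence after $k$ such deletions one already has $k$ components each carrying its own anticomplete $(A,B)$-path, so at most $k$ deletions occur and the total loss in $\flow$ is at most $k \cdot f(H)$. The key structural payoff is that once no further such deletion is possible, every remaining component $C$ admits a single node $c \in V(T)$ with $\bag(c) \cap C$ an $(A,B)$-separator in $C$ (proved by orienting each tree edge toward the side still containing an $(A,B)$-path and finding a sink). The local lemma is then applied \emph{once} per component, not once per leaf, and the number of components is irrelevant because the flow bounds sum correctly. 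This ``chop only when it genuinely splits $A$--$B$ connectivity'' idea is exactly the missing ingredient that controls the number of global charges to $O(k)$.
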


The proof of \Cref{the:topminor} is based on combining \Cref{the:degree,the:minor} by using the structure theorem for graphs excluding a topological minor~\cite{DBLP:journals/siamdm/ErdeW19,DBLP:journals/siamcomp/GroheM15}.

We remark that the proofs of all of our results can be turned into polynomial-time algorithms that return either of the outcomes.
For \Cref{the:degree} and \Cref{the:minor} the algorithms are simple, while \Cref{the:topminor} requires the algorithmic version of the structure theorem~\cite{DBLP:journals/siamcomp/GroheM15}.

It can be asked whether such a relation between separators and vertex-disjoint pairwise anticomplete paths would hold in even more general classes than graphs excluding topological minors.
For this, the natural candidate would be the graphs of \emph{degeneracy} at most $d$ for some constant $d$, i.e., the graphs that can be reduced to empty by successively deleting vertices of degree at most $d$.
However, we give a counterexample that leaves not much room for further generalization of such results.
The \emph{girth} of a graph is the length of its shortest cycle.

\begin{theorem}
\label{the:counterexample}
For all integers $k$ and $g$, there exists a graph $G_{k,g}$ of degeneracy $2$ and girth at least $g$, and sets $A,B \subseteq V(G_{k,g})$, so that $A$ cannot be separated from $B$ by fewer than $k$ vertices, but there does not exist two vertex-disjoint anticomplete paths between $A$ and $B$.
\end{theorem}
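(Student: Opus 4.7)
The plan is to construct $G_{k,g}$ explicitly, starting from a ``template'' graph that trivially satisfies the no-two-anticomplete-paths property and then modifying it to obtain girth $\geq g$ and degeneracy $2$. A natural template is $K_{k,2k}$ with bipartition $X = \{x_1,\dots,x_k\}$ and $Y = \{y_1,\dots,y_{2k}\}$, setting $A = \{y_1,\dots,y_k\}$ and $B = \{y_{k+1},\dots,y_{2k}\}$. In $K_{k,2k}$ the $A$--$B$ connectivity is exactly $k$ (since $X$ is an $(A,B)$-separator of size $k$ and the paths $y_i\text{--}x_i\text{--}y_{k+i}$ give $k$ vertex-disjoint $A$--$B$ paths), and any two vertex-disjoint $A$--$B$ paths each traverse some $X$-vertex; because every $X$-vertex is adjacent to every $Y$-vertex, the $X$-vertex on one path is adjacent to a $y$-endpoint of the other, which rules out anticompleteness. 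This template, however, has girth $4$ and degeneracy $k$.

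The key step is to inflate the template to a graph with girth $\geq g$ and degeneracy $2$ while preserving the no-anticomplete-paths property. A naive subdivision of every edge fails: after subdivision, the two $A$--$B$ paths obtained from two parallel template edges become vertex-disjoint subdivided paths with no edges between them, because all edges of the new graph lie inside a single subdivided edge. Instead, I would replace each bottleneck vertex $x_j$ by a long cycle $C_j$ of length $\Theta(g)$, attach each $y$-vertex to a distinct, equally-spaced point of $C_j$, and additionally add a sparse set of carefully-placed ``shortcut'' edges between pairs of distinct cycles $C_j, C_{j'}$, designed so that any two vertex-disjoint $A$--$B$ paths traversing different cycles are crossed by such a shortcut. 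Degeneracy $2$ follows by peeling the $y$-vertices and shortcut-endpoints first (each has small degree after peeling), leaving each $C_j$ as a simple cycle that peels as usual. Girth $\geq g$ is maintained provided the cycles are long enough and the shortcut edges are placed at mutually far-apart positions along them.

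The main obstacle is verifying the no-two-anticomplete-paths property after this modification. This requires a case analysis over pairs of vertex-disjoint $A$--$B$ paths $(P,P')$ classified by which cycles $C_j$ they enter, which arcs of each cycle they traverse, and which shortcut edges and $y$-vertices they use. In every case, one must exhibit an edge of $G_{k,g}$ with one endpoint in $V(P)$ and one in $V(P')$: when the two paths use different cycles, a shared $y$-attachment or a shortcut edge provides the crossing; when they share a cycle, arc-geometry on that cycle forces the crossing. Making this compatible with girth $\geq g$ is the combinatorial crux: the positions of attachments and shortcut edges must be chosen precisely. If the direct construction proves too delicate, a fallback strategy is an inductive construction on $k$, building $G_{k,g}$ from $G_{k-1,g}$ by attaching a long-cycle gadget that raises the $A$--$B$ connectivity by one while preserving degeneracy $2$ and girth $\geq g$.
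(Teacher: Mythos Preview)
Your proposal has a genuine gap: the ``shortcut edges'' that are supposed to rescue the cycle-inflation construction are never specified, and in fact no sparse placement of them can work. After you replace each $x_j$ by a long cycle $C_j$ and attach the $y$-vertices at spread-out points, it is easy to find two vertex-disjoint anticomplete $(A,B)$-paths: route $P$ from $y_1$ to $y_{k+1}$ along a short arc of $C_1$ containing no other attachment points, and $P'$ from $y_2$ to $y_{k+2}$ along a short arc of $C_2$ likewise. The only potential cross-edges are $y$-to-cycle attachments, and those land outside the chosen short arcs, so $P$ and $P'$ are anticomplete. To block \emph{every} such pair you would need, for every pair of cycles and every pair of arc choices on them, a shortcut edge hitting both arcs; this forces many shortcut endpoints to cluster on each cycle, destroying either the girth bound or the degeneracy bound. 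Your same-cycle case is also not correct as stated: two paths through $C_j$ whose four endpoints are \emph{not} interleaved around the cycle can use disjoint, far-apart arcs with no edge between them.

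The missing idea is that one should arrange for every $(A,B)$-path to pass through some vertex whose \emph{closed neighbourhood alone} is already an $(A,B)$-separator; then any second path must meet that neighbourhood and cannot be anticomplete. The paper achieves this with a much simpler picture than yours: take $k$ long parallel paths from $A$ to $B$, and at $k$ well-spaced ``columns'' make one designated path-vertex adjacent to the corresponding vertex on every other path (a star across the column). The $k$ star centres together form an $(A,B)$-separator, so every $(A,B)$-path contains some centre; and each centre's closed neighbourhood is exactly one full column, which is itself an $(A,B)$-separator. Degeneracy $2$ follows because peeling degree-$2$ vertices leaves a forest, and the girth is controlled by the spacing of the columns. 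Your template-plus-inflation approach never creates a vertex with this ``its neighbourhood is a separator'' property, which is why the case analysis cannot be completed.
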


\section{Preliminaries}
For a positive integer $n$, we denote by $[n]$ the set of integers $\{1,\ldots,n\}$.
We denote the set of vertices of a graph $G$ by $V(G)$ and the set of edges by $E(G)$.
The set of neighbors of a vertex $v$ is denoted by $N(v)$, and the closed neighborhood by $N[v] = N(v) \cup \{v\}$.
For a set of vertices $X \subseteq V(G)$, we denote its neighborhood by $N(X) = \bigcup_{v \in X} N(v) \setminus X$ and the closed neighborhood by $N[X] = N(X) \cup X$.
A graph $G'$ is a subgraph of $G$ if $V(G') \subseteq V(G)$ and $E(G') \subseteq E(G)$.
The subgraph induced by a set of vertices $X \subseteq V(G)$ is denoted by $G[X]$, and we also use $G \setminus X = G[V(G) \setminus X]$.
We use the convention that a connected component of a graph $G$ is a maximal set of vertices $C \subseteq V(G)$ so that $G[C]$ is connected.
We say that two sets of vertices $X,Y \subseteq V(G)$ are anticomplete if there are no edges with one endpoint in $X$ and another in $Y$.

A path in a graph $G$ is a non-empty sequence $P = v_1, \ldots, v_\ell$ of distinct vertices so that consecutive vertices are adjacent.
For sets $A,B \subseteq V(G)$, we say that $P$ is an $(A,B)$-path if $v_1 \in A$ and $v_\ell \in B$.
We denote by $\flow_G(A,B)$ the maximum cardinality of a collection of vertex-disjoint $(A,B)$-paths in $G$.
We say that a set $S \subseteq V(G)$ is an $(A,B)$-separator if every $(A,B)$-path intersects the set $S$.

\begin{theorem}[Menger~\cite{menger1927allgemeinen}]
Let $G$ be a graph and $A,B \subseteq V(G)$.
It holds that $\flow_G(A,B)$ is equal to the minimum size of an $(A,B)$-separator.
\end{theorem}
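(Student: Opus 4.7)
The plan is to prove the two inequalities separately. The direction $\flow_G(A,B) \leq \min |S|$, where $S$ ranges over $(A,B)$-separators, is immediate: if $\mathcal{P}$ is any collection of vertex-disjoint $(A,B)$-paths, then every path in $\mathcal{P}$ must contain at least one vertex of $S$, and these vertices are distinct across paths, so $|\mathcal{P}| \leq |S|$. Taking $|\mathcal{P}| = \flow_G(A,B)$ and $|S|$ minimum yields the inequality.

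For the converse, I would argue by induction on $|E(G)|$. Let $k$ denote the minimum size of an $(A,B)$-separator in $G$; the goal is to exhibit $k$ pairwise vertex-disjoint $(A,B)$-paths. If $E(G) = \emptyset$, then $A \cap B$ is itself a minimum separator and the length-$0$ paths at each vertex of $A \cap B$ suffice. Otherwise, pick an edge $e = uv$ and split into two cases. First, if $G \setminus e$ still has minimum $(A,B)$-separator size $k$, then the inductive hypothesis, applied to $G \setminus e$, directly supplies $k$ vertex-disjoint $(A,B)$-paths, which remain paths in $G$. Second, if the minimum separator size in $G \setminus e$ drops to $k - 1$, fix such a separator $S$ of size $k-1$; then both $T_1 = S \cup \{u\}$ and $T_2 = S \cup \{v\}$ are minimum $(A,B)$-separators in $G$ of size exactly $k$. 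I would then split $G$ along a carefully chosen minimum separator $T \in \{T_1, T_2\}$ into an ``$A$-side'' $G_A$ (the subgraph induced by $T$ together with the vertices reachable from $A$ in $G \setminus (T \setminus A)$) and a ``$B$-side'' $G_B$ defined symmetrically, apply induction to obtain $k$ vertex-disjoint $(A,T)$-paths in $G_A$ and $k$ vertex-disjoint $(T,B)$-paths in $G_B$, and concatenate matched pairs at their common endpoints in $T$.

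The main obstacle is ensuring that the recursive calls are on strictly smaller instances and that the concatenation is legitimate. Strict decrease of $|E(G_A)| + |E(G_B)|$ relative to $|E(G)|$ has to be verified by choosing $T$ so that neither side contains the edge $e$, which is always possible since exactly one of $u, v$ lies in each $T_i$. For the concatenation to produce $k$ disjoint $(A,B)$-paths, one uses that both recursive collections consist of exactly $k$ paths that, by a minimality argument, must each hit every vertex of $T$ exactly once; this gives a canonical pairing across $T$. A small amount of case analysis is needed to handle the degenerate situation where $A$ or $B$ meets $T$, which is resolved by the convention that a single vertex is itself a length-$0$ path.
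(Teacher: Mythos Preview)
The paper does not give its own proof of this statement: Menger's theorem is stated with a citation and used as a black box throughout. So there is nothing to compare your argument against on the paper's side.

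That said, your sketch follows a recognizable classical route (edge-deletion induction), but there is a genuine gap in the key step. You write that one can choose $T \in \{T_1, T_2\}$, with $T_1 = S \cup \{u\}$ and $T_2 = S \cup \{v\}$, so that \emph{neither} $G_A$ nor $G_B$ contains the edge $e = uv$, ``since exactly one of $u,v$ lies in each $T_i$.'' This is false. Say $u$ lies on the $A$-side and $v$ on the $B$-side of $S$ in $G \setminus e$ (which is forced, up to relabelling). If you take $T = T_1 = S \cup \{u\}$, then $u \in T \subseteq V(G_B)$ and $v$ lies in a $B$-component of $G \setminus T$, so $v \in V(G_B)$ as well and hence $e \in E(G_B)$. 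Symmetrically, with $T = T_2$ the edge $e$ lands in $G_A$. In either case exactly one side still contains $e$, so ``$e$ is absent from both sides'' is never achieved, and your stated reason for strict edge decrease on \emph{both} recursive calls does not go through. (Also, the quantity you name, $|E(G_A)| + |E(G_B)|$, is not the relevant one: induction is applied to $G_A$ and $G_B$ separately, so you need each of $|E(G_A)|$ and $|E(G_B)|$ to be strictly smaller than $|E(G)|$.)

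The standard ways to close this gap are either (i) to run the induction via the \emph{contraction} $G/e$, which produces a size-$k$ separator of $G$ containing \emph{both} $u$ and $v$, after which a separate argument (handling the degenerate cases $A \subseteq S$ or $B \subseteq S$) shows each side is strictly smaller; or (ii) in the deletion version, to first dispose of the easy case where every edge meets $A \cup B$, then pick $e$ with both endpoints outside $A \cup B$, which rules out the degenerate possibility $G_A = G$ or $G_B = G$ and restores strict decrease on both sides. Your outline is salvageable along either line, but as written the justification for the recursive calls being on strictly smaller instances is incorrect.
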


The contraction of an edge $ab \in E(G)$ means creating a vertex $c$ so that $N(c) = N(a) \cup N(b) \setminus \{a,b\}$, and deleting the vertices $a$ and $b$.
A graph $H$ is a minor of $G$ if $H$ can be obtained from a subgraph of $G$ by edge contractions.
A graph $H$ is a topological minor of $G$ if $H$ can be obtained from a subgraph of $G$ by contractions of edges incident to vertices of degree $2$.

\section{Graphs of bounded degree}
In this section we prove \Cref{the:degree}, i.e., the result for graphs of degree at most $\Delta$.

A distance-3 independent set in a graph $G$ is a set $I \subseteq V(G)$ so that if $u,v \in I$ are distinct vertices in $I$, then the closed neighborhoods $N[u]$ and $N[v]$ are disjoint (equivalently, the distance between $u$ and $v$ is at least $3$).

We first show that the vertices of any given distance-3 independent set can be made to have degree at most $2$ without decreasing $\flow_G(A,B)$ much, and then use the procedure on all vertices by observing that a bounded-degree graph can be partitioned into a bounded number of distance-3 independent sets.

\begin{lemma}
\label{lem:onespars}
Let $G$ be a graph of maximum degree $\Delta$, $A,B \subseteq V(G)$, and let $I \subseteq V(G) \setminus N[A \cup B]$ be a distance-3 independent set in $G$.
There exists an induced subgraph $G[X]$ of $G$ so that
\begin{enumerate}
\item $A,B \subseteq X$,
\item $\flow_{G[X]}(A,B) \ge \flow_G(A,B)/(\Delta+1)$, and
\item every vertex in $I \cap X$ has degree at most $2$ in $G[X]$.
\end{enumerate}
\end{lemma}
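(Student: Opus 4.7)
The plan is to take a maximum collection $\mathcal{P}$ of $k := \flow_G(A,B)$ vertex-disjoint $(A,B)$-paths in $G$ with the minimum total number of vertices, and then apply a local modification at each $v \in I$ with $\deg_G(v) \ge 3$. A standard shortcutting argument using the minimality of $\mathcal{P}$ shows that for each $v \in I$ and each $P \in \mathcal{P}$ with $v \in V(P)$, the path $P$ uses exactly three vertices of $N[v]$, namely $v$ together with its two neighbors on $P$: otherwise $P$ could be shortened by using an edge $vu$ for some $u \in N(v) \cap V(P)$ that is not already adjacent to $v$ on $P$, contradicting the minimality. Combined with vertex-disjointness of the paths, at most $\Delta + 1$ paths of $\mathcal{P}$ meet any $N[v]$.

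To construct $X$, I would apply at each $v \in I$ with $\deg_G(v) \ge 3$ the following local modification, which is well-defined because the closed neighborhoods $\{N[v] : v \in I\}$ are pairwise disjoint by distance-$3$ independence: if some $P \in \mathcal{P}$ passes through $v$, place $v$ together with its two neighbors on $P$ into $X$ and delete the remaining $\deg_G(v) - 2$ neighbors of $v$ from $X$; otherwise delete $v$ from $X$ and keep $N(v)$ in $X$. By construction $A, B \subseteq X$, and every $v \in I \cap X$ has degree at most $2$ in $G[X]$, since either $v$ had degree at most $2$ to begin with or the second case of the modification leaves $v$ with exactly two retained neighbors.

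To establish $\flow_{G[X]}(A, B) \ge k/(\Delta+1)$, I would use Menger's theorem together with a cut-extension argument: for any $(A, B)$-separator $S$ of $G[X]$, extend $S$ to an $(A, B)$-separator $S^\star$ of $G$ by setting $S^\star = S \cup \bigcup_{s \in S} \left(N_G(s) \setminus X\right)$. The size bound $|S^\star| \le |S| + \Delta|S| = (\Delta+1)|S|$ is immediate, and combined with $|S^\star| \ge k$ from Menger's theorem applied in $G$ this yields $|S| \ge k/(\Delta+1)$; since this holds for every $(A,B)$-separator of $G[X]$, by Menger's theorem applied in $G[X]$ we conclude $\flow_{G[X]}(A,B) \ge k/(\Delta+1)$.

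The main obstacle of the proof is verifying that the $S^\star$ defined above really is an $(A,B)$-separator of $G$: every $(A,B)$-path of $G$ that avoids $S$ must hit some vertex of $\bigcup_{s \in S}(N_G(s) \setminus X)$. The argument needs to track how such a path enters and leaves the deleted vertices in $N[v]$ for $v \in I$ with $\deg_G(v) \ge 3$, and to use both the shortcutting property of $\mathcal{P}$ and the specific structure of the local modification to show that any traversal of such a deleted vertex must pass through a vertex of $V(G) \setminus X$ that is a $G$-neighbor of some $s \in S$. Formalizing this case analysis—in particular, handling the situation in which a deleted vertex $v \in I$ not on any path of $\mathcal{P}$ is used to cross a cut of $G[X]$ through its intact $G$-neighborhood—is the technical heart of the lemma.
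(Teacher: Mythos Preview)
Your cut-extension step is where the argument breaks, and it cannot be repaired with the $S^\star$ you propose. Here is a concrete counterexample. Take the graph $G$ on vertices $a_0,z,x_1,x_2,b_0,a,v,b,b_1,c$ with edges
\[
a_0z,\; zx_1,\; x_1x_2,\; x_2b_0,\; za,\; av,\; vb,\; bb_1,\; b_1b_0,\; vc,
\]
and set $A=\{a_0\}$, $B=\{b_0\}$, $I=\{v\}$. Then $\Delta=3$, $v\notin N[A\cup B]$, and $\flow_G(A,B)=1$. The unique minimum-length $(A,B)$-path is $a_0\,z\,x_1\,x_2\,b_0$, so $\mathcal{P}$ consists of this path, which does not pass through $v$; your rule deletes $v$, giving $X=V(G)\setminus\{v\}$. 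In $G[X]$ the set $S=\{x_1\}$ is a minimum $(A,B)$-separator, and since $N_G(x_1)=\{z,x_2\}\subseteq X$ your extension gives $S^\star=\{x_1\}$. But the $(A,B)$-path $a_0\,z\,a\,v\,b\,b_1\,b_0$ in $G$ avoids $S^\star$ entirely. The case you flag as the ``technical heart'' --- a deleted $v\in I$ with intact $G$-neighbourhood being used to cross a cut of $G[X]$ --- is not a difficulty to be formalised; it is a genuine obstruction. A minimum separator of $G[X]$ can sit arbitrarily far from every deleted vertex, so adding deleted $G$-neighbours of $S$ recovers nothing.

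The paper's proof avoids this by reversing the order of the two losses. Instead of first taking $k$ optimal paths in $G$ and then trying to certify a $(\Delta+1)$-factor via cuts, it first contracts each ball $N[v]$, $v\in I$, to a single vertex, obtaining $G'$; the $(\Delta+1)$-factor is paid up front via the trivial observation that any separator in $G'$ blows up to a separator in $G$ of at most $(\Delta+1)$ times the size. One then takes $t\ge k/(\Delta+1)$ disjoint paths in $G'$ and expands them back to $G$: by construction at most one expanded path meets any $N[v]$, and it meets it in at most three vertices. Now the local deletions around each $v\in I$ can be made without touching \emph{any} of the $t$ paths, so all of them survive in $G[X]$ and the flow bound is immediate --- no cut-extension is needed. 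The crucial difference is that the paper's paths are chosen to already respect the ball structure, whereas your minimum-length paths need not.
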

\begin{proof}
Because $I$ is a distance-3 independent set, the sets $N[u]$ and $N[v]$ are disjoint for distinct $u,v \in I$.
Moreover, because $I \subseteq V(G) \setminus N[A \cup B]$, they do not contain any vertices from $A \cup B$.
Let $G'$ be the graph obtained by contracting each set $N[v]$ for $v \in I$ into a single vertex.

We have that $\flow_{G'}(A,B) \ge \flow_G(A,B)/(\Delta+1)$, because any $(A,B)$-separator of size $k$ in $G'$ can be mapped into an $(A,B)$-separator of size $k \cdot (\Delta+1)$ in $G$ by expanding it according to the contractions.
Let $t = \flow_{G'}(A,B)$, and let $P'_1, \ldots, P'_t$ be $t$ vertex-disjoint $(A,B)$-paths in $G'$.
By expanding these paths according to the contractions, they can be mapped into $t$ vertex-disjoint $(A,B)$-paths $P_1, \ldots, P_t$ in $G$ with the additional property that for every $v \in I$, at most one such path intersects $N[v]$, and moreover, each path intersects $N[v]$ on at most three vertices.

We then construct the induced subgraph $G[X]$ by deleting some vertices in $N[v]$ for every $v \in I$, without deleting any vertices in the paths $P_1, \ldots, P_t$.
If none of the paths contains $v \in I$, then we delete only $v$ from $N[v]$.
Otherwise, let $P_i$ be a path that contains $v \in I$.
We have that no other path contains vertices in $N[v]$.
Let $\{u,w\} \subseteq N(v)$ be the other two vertices of $P_i$ in $N[v]$.
We delete from $N[v]$ all other vertices except $v$,$u$, and $w$.

It can be verified that we do not delete any vertices in the paths $P_1,\ldots,P_t$, so 
\[\flow_{G[X]}(A,B) \ge t \ge \flow_G(A,B)/(\Delta+1).\]
Also, for every $v \in I$, we either delete it, or delete all but two neighbors of it, so every vertex in $I \cap X$ has degree at most $2$ in $G[X]$.
\end{proof}

We then apply \Cref{lem:onespars} iteratively to make all vertices in $V(G) \setminus N[A \cup B]$ to have degree at most $2$ without decreasing $\flow_G(A,B)$ much.

\begin{lemma}
\label{lem:mainspars}
Let $G$ be a graph of maximum degree $\Delta$, and $A,B \subseteq V(G)$.
There exists an induced subgraph $G[X]$ of $G$ so that
\begin{enumerate}
\item $A,B \subseteq X$,
\item $\flow_{G[X]}(A,B) \ge \flow_G(A,B)/(\Delta+1)^{\Delta^2+1}$, and
\item every vertex in $X \setminus N[A \cup B]$ has degree at most $2$ in $G[X]$.
\end{enumerate}
\end{lemma}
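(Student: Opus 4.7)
The plan is to apply \Cref{lem:onespars} iteratively to a partition of $V(G) \setminus N[A \cup B]$ into distance-$3$ independent sets, with one set processed per iteration, accumulating one factor of $(\Delta+1)$ in the flow loss for each iteration.

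First, I would produce the partition. Let $G^2$ denote the graph on $V(G)$ where $uv$ is an edge whenever the distance between $u$ and $v$ in $G$ is at most $2$; equivalently, $uv$ is an edge of $G^2$ iff $N[u]\cap N[v]\neq\emptyset$. A vertex has at most $\Delta$ neighbors in $G$, each contributing at most $\Delta-1$ additional vertices at distance $2$, so the maximum degree of $G^2$ is at most $\Delta+\Delta(\Delta-1)=\Delta^2$. Hence $G^2$ restricted to $V(G)\setminus N[A\cup B]$ is $(\Delta^2+1)$-colorable by a greedy argument, and the color classes are independent sets in $G^2$, i.e., distance-$3$ independent sets $I_1,\ldots,I_{\Delta^2+1}$ of $G$ contained in $V(G)\setminus N[A\cup B]$ that partition $V(G)\setminus N[A\cup B]$.

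Next, I would build a descending chain $V(G)=X_0\supseteq X_1\supseteq\cdots\supseteq X_{\Delta^2+1}$ by setting $X_j$ to be the vertex set produced by applying \Cref{lem:onespars} to the graph $G[X_{j-1}]$ (whose maximum degree is at most $\Delta$) with the distance-$3$ independent set $I_j\cap X_{j-1}$. One needs to verify that the hypotheses of \Cref{lem:onespars} remain satisfied at each step: since taking induced subgraphs can only increase pairwise distances, $I_j\cap X_{j-1}$ is still a distance-$3$ independent set in $G[X_{j-1}]$, and since $N_{G[X_{j-1}]}[A\cup B]\subseteq N_G[A\cup B]$, we have $I_j\cap X_{j-1}\subseteq V(G[X_{j-1}])\setminus N_{G[X_{j-1}]}[A\cup B]$. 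Each iteration preserves $A,B\subseteq X_j$ and multiplies the flow lower bound by at least $1/(\Delta+1)$, yielding
\[\flow_{G[X_{\Delta^2+1}]}(A,B)\ \geq\ \flow_G(A,B)/(\Delta+1)^{\Delta^2+1}.\]

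Finally, I would set $X=X_{\Delta^2+1}$ and argue the degree condition. For each $j$, after iteration $j$ every vertex in $I_j\cap X_j$ has degree at most $2$ in $G[X_j]$; subsequent iterations only remove vertices, and taking induced subgraphs cannot increase degree, so each such vertex still has degree at most $2$ in $G[X]$. Since the $I_j$ partition $V(G)\setminus N[A\cup B]$, every vertex of $X\setminus N[A\cup B]$ lies in some $I_j\cap X$ and hence has degree at most $2$ in $G[X]$. The only genuinely delicate point is checking that the degree-$2$ property established in earlier iterations is not destroyed by later applications of \Cref{lem:onespars}, and this is precisely handled by the monotonicity of degree under taking induced subgraphs.
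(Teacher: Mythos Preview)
Your proposal is correct and follows essentially the same approach as the paper: partition $V(G)\setminus N[A\cup B]$ into $\Delta^2+1$ distance-$3$ independent sets via a greedy coloring of $G^2$, and iteratively apply \Cref{lem:onespars}. Your write-up is in fact slightly more explicit than the paper's, as you spell out why the hypotheses of \Cref{lem:onespars} persist and why the degree-$2$ condition survives later iterations.
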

\begin{proof}
Let $Y = V(G) \setminus N[A \cup B]$.
Because the square of $G$ has maximum degree $\Delta^2$, the vertices in $Y$ can be partitioned into $\Delta^2+1$ distance-3 independent sets $I_1, \ldots, I_{\Delta^2+1}$.
Then, we apply \Cref{lem:onespars} one by one to the sets $I_1, \ldots, I_{\Delta^2+1}$.
In particular, let $G_0 = G$.
For each $i \in [\Delta^2+1]$, we obtain $G_i$ from $G_{i-1}$ by applying \Cref{lem:onespars} to the graph $G_{i-1}$ and the distance-3 independent set $I_i \cap V(G_{i-1})$.
Note that taking induced subgraphs does not decrease distances or increase the maximum degree, so all of the invariants required by \Cref{lem:onespars} remain valid.

Let $G[X] = G_{\Delta^2+1}$.
By the properties guaranteed by \Cref{lem:onespars} we have that $A,B \subseteq X$, every vertex in $Y \cap X$ has degree at most $2$ in $G[X]$, and 
\[\flow_{G[X]}(A,B) \ge \flow_G(A,B)/(\Delta+1)^{\Delta^2+1}.\]
\end{proof}

We then finish the proof of \Cref{the:degree}.

\begin{proof}[Proof of \Cref{the:degree}]
First, let us observe that without loss of generality, we can assume that every vertex in $A \cup B$ has degree at most $1$, and every vertex in $N(A) \cup N(B)$ has degree at most $2$.
In particular, while there exists a vertex $a \in A$, so that $a$ has degree more than $1$ or $N(a)$ contains a vertex of degree more than $2$, we can add to $G$ a two-vertex path whose one endpoint is adjacent to $a$, and then move $a$ to the other (degree-1) endpoint.
The same process is applied to the set $B$.
Any $(A,B)$-path in the new graph can be mapped into an $(A,B)$-path in the old graph by simply removing the two first and the two last vertices, and this mapping preserves vertex-disjointness and anticompleteness between two paths.

Then, by Menger's theorem it suffices to show that if $\flow_G(A,B) \ge k \cdot (\Delta+1)^{\Delta^2+1}$, then there exists $k$ vertex-disjoint and pairwise anticomplete paths between $A$ and $B$.
We apply \Cref{lem:mainspars} to the graph $G$, and obtain an induced subgraph $G[X]$ that contains $A \cup B$, has maximum degree $2$, and in which
\[\flow_{G[X]}(A,B) \ge \flow_G(A,B)/(\Delta+1)^{\Delta^2+1} \ge k.\]
Because each vertex in $A \cup B$ has degree at most $1$ and all other vertices have degree at most $2$, any vertex-disjoint $(A,B)$-paths in $G[X]$ are also pairwise anticomplete.
\end{proof}

\section{Graphs excluding a minor}
In this section we prove \Cref{the:minor}, i.e., the result for $H$-minor-free graphs.
The only tool we need is the following fact that minor-free graphs are sparse.

\begin{theorem}[Kostochka~\cite{DBLP:journals/combinatorica/Kostochka84}, Thomason~\cite{thomason_1984}]
\label{thm:minorfreeavgdegree}
If a graph $G$ does not contain a graph $H$ as a minor, then $G$ has average degree at most $\OO(|V(H)| \cdot \sqrt{\log |V(H)|})$.
\end{theorem}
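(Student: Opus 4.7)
The plan is to prove the Kostochka--Thomason bound via a contraction-minimality argument. First, I would reduce to the case $H = K_h$ with $h = |V(H)|$: since $H$ is a subgraph of $K_h$, any graph containing $K_h$ as a minor also contains $H$ as a minor, so it suffices to show that a graph of average degree at least $d := C h \sqrt{\log h}$, for a suitable absolute constant $C$, contains $K_h$ as a minor.

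Suppose for contradiction that $G$ is a counterexample minimizing $|V(G)| + |E(G)|$. Iteratively removing vertices of degree below $d/2$ shows we may assume $G$ has minimum degree at least $d/2$ (deletion cannot create a $K_h$-minor, and the average-degree threshold is preserved by the standard halving trick). Minimality then forces $G$ to be \emph{contraction-critical}: contracting any edge $uv$ must produce a graph with a $K_h$-minor, since otherwise the contracted graph would itself be a smaller counterexample. From this one deduces a strong local density condition — essentially, that every adjacent pair of vertices shares many common neighbors, because the merged vertex resulting from a contraction must still be incident to enough edges to meet the density threshold.

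To exhibit the $K_h$-minor, I would use small connected subgraphs as branch sets. For each vertex $v$, consider the ball $B_r(v)$ of radius $r \approx \sqrt{\log h}$; by the minimum-degree bound these balls are connected and, in a contraction-critical graph, contain enough vertices to serve as branch sets after further contraction. The remaining task is to select $h$ pairwise vertex-disjoint such balls, every pair of which is joined by at least one edge. This is accomplished by an averaging / probabilistic argument exploiting the minimum degree together with the common-neighbor condition from contraction-criticality: choosing centers greedily or at random, the expected number of mutually adjacent pairs of balls exceeds $\binom{h}{2}$ once $d$ crosses the threshold $C h \sqrt{\log h}$.

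The main obstacle is calibrating $r$ and $d$ precisely enough to obtain the tight $\sqrt{\log h}$ factor — naive estimates tend to yield a weaker $\log h$ or $(\log h)^{3/2}$ bound. This requires a careful double counting of edges between pairs of prospective branch sets in a contraction-critical graph, balancing two opposing constraints: the balls must be small enough that $h$ pairwise disjoint ones fit inside $G$ (limiting $r$ from above), yet dense enough that edges cross between almost every pair (forcing $r$ from below). Optimizing this trade-off is precisely where Thomason's $\sqrt{\log h}$ arises, and constitutes the genuine technical content of the theorem.
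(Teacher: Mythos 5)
The paper does not prove this statement at all: it is imported as a black box from Kostochka and Thomason, so there is no in-paper argument to compare yours against. Judged on its own terms, your proposal is an outline rather than a proof. The reductions you begin with are fine and standard (replacing $H$ by $K_h$ with $h=|V(H)|$, and passing to a subgraph of minimum degree at least $d/2$), but by your own admission the step that actually produces the $\sqrt{\log h}$ factor --- the counting or probabilistic argument selecting $h$ pairwise disjoint, pairwise adjacent branch sets --- is left as ``the genuine technical content of the theorem.'' That step \emph{is} the theorem, so the proposal does not establish it.

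There is also a concrete logical slip in the contraction-minimality step. If $G$ has no $K_h$ minor, then no graph obtained from $G$ by contracting an edge has a $K_h$ minor either, since the minor relation is transitive; so it cannot be that ``contracting any edge must produce a graph with a $K_h$-minor.'' What minimality of a counterexample actually yields is that contracting any edge must drop the density below the threshold (otherwise the contracted graph would be a smaller counterexample), and from the number of edges lost in a contraction one deduces that adjacent vertices share many common neighbours. You do state the right local density condition, but via an invalid justification. Even granting it, the ball-growing step is unquantified: with minimum degree $\Theta(h\sqrt{\log h})$ it is not clear that $h$ pairwise disjoint balls of radius about $\sqrt{\log h}$ even fit inside $G$, nor that almost every pair of them is joined by an edge; calibrating exactly this trade-off is where Kostochka's and Thomason's work lies. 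For the purposes of this paper, citing the result as the authors do is the appropriate course.
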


Then we are ready to prove \Cref{the:minor}.

\begin{proof}[Proof of \Cref{the:minor}]
Let $t$ be the size of a smallest $(A,B)$-separator.
If $t \le \OO(k \cdot |V(H)| \cdot \sqrt{\log |V(H)|})$ we are done, so assume $t \ge c \cdot k \cdot |V(H)| \cdot \sqrt{\log |V(H)|}$ for some positive universal constant $c$ to be chosen later.
Now, let $P_1, \ldots, P_t$ be a collection of $t$ vertex-disjoint $(A,B)$-paths that exist by Menger's theorem.
Let $G_P$ be the graph with $t$ vertices $v_1, \ldots, v_t$ so that $v_i$ and $v_j$ for $i \neq j$ are adjacent if there is at least one edge between $P_i$ and $P_j$.

We observe that $G_P$ is a minor of $G$, obtained by contracting each of the paths $P_i$ to one vertex and deleting other vertices.
Therefore, $G_P$ excludes $H$ as a minor, and therefore by \Cref{thm:minorfreeavgdegree}, $G_P$ has average degree $\OO(|V(H)| \cdot \sqrt{\log |V(H)|})$, implying that $G_P$ contains an independent set of size at least
\begin{align*}
\frac{t}{\OO(|V(H)| \cdot \sqrt{\log |V(H)|})} &\ge \frac{c \cdot k \cdot |V(H)| \cdot \sqrt{\log |V(H)|}}{\OO(|V(H)| \cdot \sqrt{\log |V(H)|})},
\end{align*}
which is at least $k$ after choosing $c$ to be large enough.
An independent set of $G_P$ of size $k$ corresponds to a collection of $k$ disjoint pairwise anticomplete $(A,B)$-paths.
\end{proof}

\section{Graphs excluding a topological minor}
In this section we prove \Cref{the:topminor}, i.e., the result for $H$-topological-minor-free graphs.
We use the structure theorem for $H$-topological-minor-free graphs, for which we need to recall some definitions.

A \emph{tree decomposition} of a graph $G$ is a pair $(T,\bag)$, where $T$ is a tree and $\bag \colon V(T) \rightarrow 2^{V(G)}$ is a function satisfying
\begin{enumerate}
\item for every edge $uv \in E(G)$, there exists a node $x \in V(T)$ so that $\{u,v\} \subseteq \bag(x)$ and
\item for every vertex $v \in V(G)$, the set of nodes $\{x \mid v \in \bag(x)\}$ induces a non-empty connected subtree of $T$.
\end{enumerate}

Let $(T,\bag)$ be a tree decomposition of $G$ and $xy \in E(T)$ be an edge of $T$.
The \emph{adhesion} at $xy$ is defined as $\adh(xy) = \bag(x) \cap \bag(y)$.
Let $x \in V(T)$.
The \emph{torso} at $x$ is defined as the graph obtained from $G[\bag(x)]$ by adding edges to make every adhesion of an edge incident to $x$ into a clique, and is denoted by $\torso(x)$.

We are ready state the structure theorem.

\begin{theorem}[Grohe and Marx~\cite{DBLP:journals/siamcomp/GroheM15}, see also Erde and Wei{\ss}auer~\cite{DBLP:journals/siamdm/ErdeW19}]
\label{thm:topminorstructure}
Let $G$ be a graph that excludes a graph $H$ as a topological minor.
There exists a function $f(H)$ so that $G$ admits a tree decomposition $(T,\bag)$, where 
\begin{enumerate}
\item every adhesion has size at most $f(H)$ and
\item for every node $x \in V(T)$, $\torso(x)$ either excludes $K_{f(H)}$ as a minor or has at most $f(H)$ vertices of degree at least $f(H)$.
\end{enumerate}
\end{theorem}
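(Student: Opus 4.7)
The plan is to prove the structure theorem by induction, constructing the tree decomposition by repeatedly cutting $G$ along small vertex separators. Let $s = |V(H)|$, and take $f(H)$ to be a sufficiently large polynomial in $s$; the exact polynomial will be dictated by the forcing lemma below.

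The first step, and the technical heart, is a degree-forcing lemma: there exists $\gamma(s)$ so that if $G'$ is $\gamma(s)$-connected and contains more than $\gamma(s)$ vertices of degree at least $\gamma(s)$ in $G'$, then $G'$ contains $K_s$ as a topological minor. The proof combines Mader's linkage theorem with the Bollob\'as--Thomason and Koml\'os--Szemer\'edi bound that minimum degree $\Omega(s^2)$ forces a topological $K_s$-minor: high connectivity routes internally disjoint paths between chosen high-degree vertices so that they serve as the branch vertices of the required subdivision.

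The second step uses this lemma in a recursive decomposition. Set $f = f(H) \ge \gamma(s)$. Given $G$, search for a vertex separator $S \subseteq V(G)$ with $|S| \le f$ that breaks $G$ into strictly smaller pieces. If one exists, recurse on each piece $G[V_i \cup S]$, where $V_i$ is a union of components of $G \setminus S$, to obtain tree decompositions, and then glue them to a new root node $x$ with $\bag(x) = S$. Every newly created adhesion is a subset of $S$ and hence has size at most $f$. If no such small separator exists, then $G$ is $(f+1)$-connected, and the forcing lemma applies: $G$ has at most $f$ vertices of degree $\ge f$, otherwise $G$ would contain $H$ as a topological minor. In this case, output the trivial one-node decomposition with bag $V(G)$. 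Condition 2 is then checked at every node: interior nodes have bags of size at most $f$ and hence trivial torsos; leaf nodes come either from $K_f$-minor-free pieces (first alternative of condition 2) or from highly connected pieces for which the forcing lemma, reapplied to also account for the virtual clique edges introduced by the adhesion to the parent, yields the degree bound (second alternative).

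The main obstacle is step one. Routing subdivisions rather than contractions is the central difficulty in the theory of topological minors, and pinning down the quantitative relationship between connectivity, the number of high-degree vertices, and forcing a topological $K_s$-subdivision requires careful use of refined linkage theorems --- this is exactly the technical step that occupies the bulk of Grohe and Marx's original argument and of Erde and Wei{\ss}auer's streamlined version. Everything else, including the recursive cutting and gluing and the bookkeeping of adhesions, reduces to standard tree-decomposition manipulations.
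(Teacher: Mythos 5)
The paper does not prove this statement at all: \Cref{thm:topminorstructure} is imported verbatim as a black box from Grohe and Marx (with Erde and Wei{\ss}auer cited as an alternative source), so there is no in-paper proof to compare yours against. Your sketch, taken on its own terms, has a gap that is fatal rather than merely technical. Your forcing lemma assumes the terminal piece $G'$ is globally $\gamma(s)$-connected with $\gamma(s)\ge c\,s^2$; but then \emph{every} vertex of $G'$ has degree at least $\gamma(s)$, and the Bollob\'as--Thomason / Koml\'os--Szemer\'edi theorem already forces a topological $K_s$ (hence $H$, for $s=|V(H)|$) from the minimum degree alone, with no need to count high-degree vertices. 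Consequently, in your recursion the ``no small separator'' terminal case can only occur for pieces with at most $f+1$ vertices, the ``$K_f$-minor-free torso'' outcome is never actually produced (your recursion has no stopping rule that detects it), and the procedure would output a tree decomposition in which every bag has size at most $f(H)+1$. That would prove that $H$-topological-minor-free graphs have treewidth bounded by a function of $H$ alone, which is false: planar grids exclude $K_7$ as a topological minor (their maximum degree is $4$) yet have unbounded treewidth. The real content of the theorem is precisely the lemma you gloss over, and its correct form is quite different from yours: roughly, a graph containing many high-degree vertices \emph{that cannot be separated from one another by small separators} (a condition far weaker than global high connectivity) contains a topological $K_s$, and the decomposition must then be driven by separators chosen relative to that set of high-degree vertices, not by arbitrary small separators.

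A secondary problem is the gluing step. Attaching the decompositions of the pieces $G[V_i\cup S]$ to a root node with bag $S$ yields a valid tree decomposition only if each child decomposition contains a bag including all of $S$ to which the root can be attached; enforcing this recursively makes the carried set, and hence the bags and adhesions, grow without bound unless the separators are chosen balanced with respect to what is carried. So even the bookkeeping you describe as standard does not go through as stated.
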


We then prove \Cref{the:topminor}.
We first consider a case where the decomposition $(T,\bag)$ given by \Cref{thm:topminorstructure} contains a single ``central'' node $c \in V(T)$ so that $\bag(c)$ is an $(A,B)$-separator.
Afterwards, the general case will be reduced to this.

\begin{lemma}
\label{lem:topmin}
Let $G$ be a graph and $A,B \subseteq V(G)$, and let $(T,\bag)$ be a tree decomposition of $G$ and $r$ an integer so that
\begin{enumerate}
\item every adhesion of $(T,\bag)$ has size at most $r$,
\item\label{lem:topmin:struc} for every node $x \in V(T)$, $\torso(x)$ either excludes $K_r$ as a minor or has at most $r$ vertices of degree at least $r$, and
\item\label{lem:topmin:cnode} there exists a node $c \in V(T)$ so that $\bag(c)$ is an $(A,B)$-separator.
\end{enumerate}
Then there exists a function $g(r)$ so that there exists a collection of at least $\flow_G(A,B)/g(r)$ vertex-disjoint and pairwise anticomplete paths between $A$ and $B$.
\end{lemma}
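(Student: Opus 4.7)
The strategy is to lift the problem into the torso $\torso(c)$ and apply (the proofs of) \Cref{the:minor} and \Cref{the:degree} there, exploiting the following key observation: anticompleteness of projected paths inside $\torso(c)$ is a \emph{stronger} condition than anticompleteness in $G$, because the clique edges of $\torso(c)$ record exactly the subtree-mediated interactions that are invisible when one looks only inside $\bag(c)$.

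\medskip
\noindent\textbf{Step 1 (Project the max flow to $\torso(c)$).} By Menger, fix $t = \flow_G(A,B)$ vertex-disjoint $(A,B)$-paths $P_1,\dots,P_t$. Since $\bag(c)$ is an $(A,B)$-separator, the footprints $S_i := V(P_i)\cap \bag(c)$ are nonempty and pairwise disjoint. Each $P_i$ yields a path $\tilde P_i$ in $\torso(c)$ on vertex set $S_i$: consecutive footprint vertices joined by a $G[\bag(c)]$-edge stay as that edge, while each maximal excursion of $P_i$ into a subtree $V_y$ hanging off $c$ (entering and leaving via adhesion vertices $u,v\in \adh(cy)$) is shortcut by the clique edge $uv$ of $\torso(c)$. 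The $\tilde P_i$ are vertex-disjoint paths in $\torso(c)$ from $A^* := \{\text{first vertex of }S_i\}$ to $B^* := \{\text{last vertex of }S_i\}$.

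\medskip
\noindent\textbf{Step 2 (Torso-anticompleteness implies $G$-anticompleteness).} I claim that if $\tilde P_i$ and $\tilde P_j$ have no $\torso(c)$-edge between $S_i$ and $S_j$, then $P_i$ and $P_j$ are anticomplete in $G$. Any $G$-edge between $P_i$ and $P_j$ falls in one of three cases: (i) it lies in $\bag(c)$ and is therefore a $\torso(c)$-edge between $S_i$ and $S_j$; (ii) it connects $\bag(c)$ to a subtree $V_y$, in which case its $\bag(c)$-endpoint lies in $\adh(cy)$ and the other path must have entered $V_y$ through $\adh(cy)$, so both $S_i$ and $S_j$ meet $\adh(cy)$ and produce a clique edge of $\torso(c)$ between them; (iii) it lies inside some $V_y$, again forcing both $S_i$ and $S_j$ to meet $\adh(cy)$. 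Every case contradicts the assumed $\torso(c)$-anticompleteness.

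\medskip
\noindent\textbf{Step 3 (Apply the structural dichotomy on $\torso(c)$).} By condition~(\ref{lem:topmin:struc}), $\torso(c)$ either excludes $K_r$ as a minor or has at most $r$ vertices of $\torso(c)$-degree $\ge r$. In the first case, the interaction graph on $\{\tilde P_1,\dots,\tilde P_t\}$ (where $\tilde P_i\sim \tilde P_j$ iff a $\torso(c)$-edge runs between $S_i$ and $S_j$) is a minor of $\torso(c)$ obtained by contracting each $\tilde P_i$; it is therefore $K_r$-minor-free and, by \Cref{thm:minorfreeavgdegree}, has average degree $\OO(r\sqrt{\log r})$, so it contains an independent set of size at least $t/\OO(r\sqrt{\log r})$. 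In the second case, let $H\subseteq \bag(c)$ be the at most $r$ ``hub'' vertices: discarding the at most $r$ paths that touch $H$ leaves $\ge t-r$ of the $\tilde P_i$ inside $\torso(c)\setminus H$, a graph of maximum degree $<r$; replaying the distance-$3$ independent-set reduction of \Cref{lem:onespars,lem:mainspars} on the interaction graph of these remaining $\tilde P_i$ (using that each footprint vertex has $<r$ neighbors in $\torso(c)\setminus H$, each lying in at most one other footprint) extracts a pairwise $\torso(c)$-anticomplete sub-collection of size $\Omega(t/(r+1)^{r^2+1})$. In either case Step~2 turns the selected $\tilde P_i$ into pairwise anticomplete $(A,B)$-paths in $G$, proving the lemma with $g(r)$ dominated by $(r+1)^{r^2+1}$.

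\medskip
\noindent\textbf{Main obstacle.} The most delicate piece is the bounded-degree-modulo-$r$ case in Step~3: the sparsification of \Cref{lem:mainspars} is phrased as producing \emph{new} vertex-disjoint paths inside a sparsified subgraph, whereas Step~2 demands an anticomplete sub-collection of the \emph{fixed} paths $\tilde P_i$ so that anticompleteness transfers back to $G$. The fix is to run the iterative distance-$3$ reduction on the interaction structure of the $\tilde P_i$ themselves rather than on $\torso(c)\setminus H$; getting the book-keeping right—so that the number of external adjacencies each footprint $S_i$ contributes stays controlled by $r$ independently of $|S_i|$—is where the bounded adhesion size $r$ and the pairwise disjointness of the $S_i$ are crucial.
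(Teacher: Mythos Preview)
Your Steps~1 and~2 are correct, and the observation in Step~2 (that anticompleteness in $\torso(c)$ forces anticompleteness of the original $P_i$ in $G$) is clean. The minor-free branch of Step~3 is also fine and is essentially the proof of \Cref{the:minor} applied inside $\torso(c)$.

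The gap is in the bounded-degree branch of Step~3. You need an independent set in the \emph{interaction graph} of the $\tilde P_i$, but that graph is only a \emph{minor} of $\torso(c)\setminus H$, and minors of bounded-degree graphs have no useful sparsity: contracting long paths can create vertices of arbitrarily high degree. Concretely, a single footprint $S_i$ of length $m$ can have up to $(r-1)m$ external neighbours in $\torso(c)\setminus H$, and these can lie in $(r-1)m$ different footprints $S_j$, so the interaction graph may be $K_t$. In that situation no nontrivial anticomplete sub-collection of the fixed $P_i$ exists, even though \Cref{the:degree} guarantees many anticomplete $(A,B)$-paths---they are just \emph{different} paths. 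Your ``Main obstacle'' paragraph asserts that bounded adhesion keeps the external adjacencies of $S_i$ bounded independently of $|S_i|$; this is precisely what fails, since $S_i$ can meet arbitrarily many adhesions (one per excursion of $P_i$). Lemmas~\ref{lem:onespars}--\ref{lem:mainspars} therefore cannot be ``replayed'' on the interaction graph, whose maximum degree is not controlled by $r$.

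The paper avoids this by going the other direction. It builds $G'\subseteq \torso(c)$ on $\bag(c)$ by turning each $N(C)$ into a clique, defines $A',B'$ so that $\flow_{G'}(A',B')\ge\flow_G(A,B)$, and then applies \Cref{the:degree} or \Cref{the:minor} to $G'$ as a black box to obtain \emph{new} pairwise anticomplete $(A',B')$-paths $Q_1,\dots,Q_\ell$ in $G'$. The lifting step is the point you are missing: since every $N(C)$ is a clique in $G'$, anticompleteness of the $Q_i$ forces that at most one $Q_i$ meets each $N(C)$, so the fake clique edges on $Q_i$ can be replaced by walks through the corresponding component $C$, and the ends can be extended inside $C$ to genuine vertices of $A$ and $B$, without any two lifted paths colliding or becoming adjacent. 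Thus the crucial direction is ``anticomplete in $G'$ $\Rightarrow$ anticomplete in $G$'' for \emph{freshly produced} paths, not ``anticomplete in $\torso(c)$ $\Rightarrow$ anticomplete in $G$'' for the original ones.
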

\begin{proof}
Let $c \in V(T)$ be the special central node guaranteed by \Cref{lem:topmin:cnode}.
We construct a graph $G'$ and sets $A'$ and $B'$ as follows.
The graph $G'$ is constructed from $G[\bag(c)]$ by making for each connected component $C$ of $G \setminus \bag(c)$ the neighborhood $N(C)$ into a clique.
Observe that $G'$ is a subgraph of $\torso(c)$ because each such $N(C)$ is contained in $\adh(cx)$ for a neighbor $x$ of $c$.

Then, to construct the sets $A'$ and $B'$, we iterate over all connected components $C$ of $G \setminus \bag(c)$.
If $C$ contains a vertex from $A$, then we add all vertices in $N(C)$ to $A'$, and if $C$ contains a vertex from $B$, then we add all vertices in $N(C)$ to $B'$.
Finally, we add to $A'$ all vertices in $\bag(c) \cap A$ and to $B'$ all vertices in $\bag(c) \cap B$.

We then show that $(A,B)$-paths in $G$ can be mapped into $(A',B')$-paths in $G'$.

\begin{claim}
\label{lem:topmin:claimflow}
It holds that $\flow_{G'}(A',B') \ge \flow_G(A,B)$.
\end{claim}
\begin{proof}[Proof of the claim.]
Let $t = \flow_G(A,B)$ and let $P_1, \ldots, P_t$ be a collection of $t$ vertex-disjoint $(A,B)$-paths in $G$.
We claim that the intersection $P_i \cap \bag(c)$ obtained simply by removing from $P_i$ all vertices outside of $\bag(c)$ is an $(A',B')$-path in $G'$.
First, $P_i \cap \bag(c)$ is non-empty because $\bag(c)$ is an $(A,B)$-separator in $G$.
Then, we observe that parts of $P_i$ going outside of $\bag(c)$ can be shortcutted because we made the neighborhoods of the components of $G \setminus \bag(c)$ into cliques.
Finally, the first vertex in $P_i \cap \bag(c)$ is in $A'$ because if it was outside of $\bag(c)$, then we added the neighborhood of the component to $A'$, and by a similar argument the last vertex in $P_i \cap \bag(c)$ is in $B'$.
\end{proof}

Then, we argue that there is a function $g(r)$ so that there exists a collection of at least $\flow_{G'}(A',B')/g(r)$ vertex-disjoint and pairwise anticomplete $(A',B')$-paths in $G'$.
We consider two cases: Either $G'$ excludes $K_r$ as a minor or $G'$ has at most $r$ vertices of degree at least $r$.
One of these cases holds by \Cref{lem:topmin:struc}.
If $G'$ excludes $K_r$ as a minor, then $g(r)$ exists by \Cref{the:minor}.
If $G'$ has at most $r$ vertices of degree at least $r$, then $g(r)$ exists by first deleting the vertices of degree at least $r$, and then applying \Cref{the:degree}.

It remains to argue that a collection of vertex-disjoint pairwise anticomplete $(A',B')$-paths in $G'$ can be turned into a collection of vertex-disjoint pairwise anticomplete $(A,B)$-paths in $G$.
For this, the main observation is that because the paths are pairwise anticomplete, for each component $C$ of $G \setminus \bag(c)$, at most one of the paths intersects $N(C)$.
This allows to route the parts of the paths using edges in $N(C)$ that do not exists in $G$ via the components $C$.
In particular, as at most one path intersects $N(C)$, at most one path will be routed via the component $C$.
Also, if a path starts in a vertex $a \in A'$ that was added to $A'$ because it was in $N(C)$ for some $C$ containing a vertex in $A$, we can route the path from $a$ to $C \cap A$ arbitrarily in $C$.
Similarly, we route the ends of paths to vertices in $B$.
\end{proof}

We then finish the proof of \Cref{the:topminor} by ``chopping'' the graph by adhesions until every connected component satisfies the condition of \Cref{lem:topmin}.

\begin{proof}[Proof of \Cref{the:topminor}.]
Suppose $G$ excludes $H$ as a topological minor.
Our goal is to show that there exists a function $h(H)$ so that for every positive integer $k$, there either exists a collection of $k$ vertex-disjoint and pairwise anticomplete paths between $A$ and $B$, or $A$ can be separated from $B$ by at most $k \cdot h(H)$ vertices.
Let $f(H)$ be the function and $(T,\bag)$ the tree decomposition of $G$ given by the structure theorem (\Cref{thm:topminorstructure}).
Let $g(r)$ be the function given by \Cref{lem:topmin}.
We will show that we can set $h(H) = g(f(H)) + f(H)$.
If $\flow_G(A,B) \le k \cdot h(H)$ we are done, so suppose that $\flow_G(A,B) \ge k \cdot h(H) \ge k \cdot g(f(H)) + k \cdot f(H)$.

We do the following process to delete vertices from $G$.
As long as $G$ has a connected component $C$ so that $C$ contains vertices from both $A$ and $B$ and there exists an adhesion $\adh(xy)$ of $(T,\bag)$ so that $G[C \setminus \adh(xy)]$ has at least two connected components that contain vertices from both $A$ and $B$, we delete the set $\adh(xy) \cap C$ from $G$.

We observe that at every such deletion, the number of connected components of $G$ that contain vertices from both $A$ and $B$ increases by at least one.
Therefore, if there are more than $k$ such deletions we are ready, because then we can route each of the paths in a different component, so assume there are at most $k$ such deletions.
Let $G'$ be the graph obtained at the end of the process and $A' = V(G') \cap A$ and $B' = V(G') \cap B$.
Because there are at most $k$ such deletions and the adhesions have size at most $f(H)$, we have that $\flow_{G'}(A',B') \ge \flow_G(A,B)-k \cdot f(H) \ge k \cdot g(f(H))$.

Then we argue that we can apply \Cref{lem:topmin} to every connected component of $G'$.
\begin{claim}
\label{claim:chopping}
Let $C$ be a connected component of $G'$.
There exists a node $c \in V(T)$ so that $\bag(c) \cap C$ is an $(A' \cap C, B' \cap C)$-separator in $G'$.
\end{claim}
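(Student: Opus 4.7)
The plan is a sink-finding orientation argument on $T$, driven by the termination condition of the chopping process. The easy case is when $C$ meets at most one of $A$ and $B$, so $A' \cap C = \emptyset$ or $B' \cap C = \emptyset$; then any $c \in V(T)$ works vacuously, and I will assume henceforth that $C$ meets both $A$ and $B$.

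First, for every edge $xy \in E(T)$ I would examine the graph $G'[C \setminus (\adh(xy) \cap C)]$. By the termination condition, this graph has at most one connected component meeting both $A$ and $B$, for otherwise the chopping process would still have deleted $\adh(xy) \cap C$. If for some $xy$ there is no such ``heavy'' component at all, then $\adh(xy) \cap C$, and hence $\bag(x) \cap C \supseteq \adh(xy) \cap C$, is already an $(A' \cap C, B' \cap C)$-separator in $G'[C]$, so I can take $c = x$ and stop.

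Otherwise, every edge $xy$ of $T$ has a unique heavy component, located on one of the two sides of $T$ obtained by deleting $xy$. Orient $xy$ toward that side. Since $T$ is a finite tree, this acyclic orientation must admit a sink $c$: every edge $cy$ at $c$ is oriented $y \to c$. I would then argue by contradiction that $\bag(c) \cap C$ is the desired separator. If there were an $(A' \cap C, B' \cap C)$-path $P$ in $G'[C] \setminus \bag(c)$, then the usual separation property of the tree decomposition (which descends to the induced subgraph $G' \subseteq G$) forces $V(P)$ to lie on the $y$-side of some edge $cy$, placing a heavy component on the $y$-side of $cy$ and contradicting the orientation of $cy$ toward $c$.

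The only delicate point is that $G'$ is an induced subgraph of $G$ rather than the graph the tree decomposition was built for; however, vertex deletion preserves the property that $\bag(c)$ disconnects subtrees, so each component of $G' \setminus \bag(c)$ still lies in $\bigcup_{z \in T_y} \bag(z)$ for a single neighbor $y$ of $c$, and the adhesion $\adh(cy)$ still separates the two sides in $G'$. Once this is verified, the orientation trick pinpoints $c$ with no further work.
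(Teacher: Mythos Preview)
Your proposal is correct and follows essentially the same approach as the paper: orient each edge $xy$ of $T$ toward the side containing the unique component of $G'[C \setminus \adh(xy)]$ meeting both $A$ and $B$ (using the termination condition of the chopping process), find a sink $c$, and derive a contradiction from any $(A'\cap C,B'\cap C)$-path avoiding $\bag(c)$ via the separation property of the tree decomposition. The paper phrases the orientation criterion as ``which side contains an $(A',B')$-path'' rather than ``which side contains the heavy component,'' but these are equivalent, and your explicit remark that the separation properties of $(T,\bag)$ descend to the induced subgraph $G'$ is exactly what the paper uses implicitly.
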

\begin{proof}[Proof of the claim.]
Let $xy$ be an edge of the tree $T$.
Let $T_x \subseteq V(T)$ be the set of nodes of $T$ that are closer to $x$ than $y$ (including the node $x$), and $T_y \subseteq V(T)$ the nodes that are closer to $y$ than $x$(including the node $y$).
Let $C_x = \bigcup_{z \in T_x} \bag(z) \cap C$ and $C_y = \bigcup_{z \in T_y} \bag(z) \cap C$.
We have that $\adh(xy) \cap C$ is a $(C_x,C_y)$-separator in $G'$.
Therefore, by our deletion process, at most one of $G'[C_x \setminus \adh(xy)]$ and $G'[C_y \setminus \adh(xy)]$ contains an $(A',B')$-path.

If neither of them contains an $(A',B')$-path, then $\adh(xy) \cap C$ is an $(A' \cap C, B' \cap C)$-separator in $G'$, and therefore $\bag(x) \cap C \supseteq \adh(xy) \cap C$ is an $(A' \cap C, B' \cap C)$-separator in $G'$ and we are done.
Otherwise, if $G[C_x \setminus \adh(xy)]$ contains an $(A',B')$-path, we orient the edge $xy$ towards $x$, and if $G[C_y \setminus \adh(xy)]$ contains an $(A',B')$-path, we orient the edge $xy$ towards $y$.

Because $T$ is a tree, by a walking argument there exists a node $c \in V(T)$ so that all edges incident to $c$ are oriented towards $c$ in $T$.
We claim that $\bag(c) \cap C$ is an $(A' \cap C, B' \cap C)$-separator in $G'$.
If there would be an $(A' \cap C, B' \cap C)$-path in $G'[C \setminus \bag(c)]$, then it would be contained in the graph $G'[C_y \setminus \adh(cy)]$ for some $cy \in E(T)$.
However, this would contradict that $cy$ is oriented towards $c$.
Therefore, every $(A' \cap C, B' \cap C)$-path in $G'$ intersects $\bag(c)$.
\end{proof}

Let $C$ be a connected component of $G'$.
We note that the restriction $(T, \bag\restriction_C)$ is a tree decomposition of $G'[C]$ that still satisfies the properties guaranteed by \Cref{thm:topminorstructure}, and also the node $c \in V(T)$ given by \Cref{claim:chopping} still gives an $(A' \cap C, B' \cap C)$-separator in $G'[C]$.
Therefore, we can apply \Cref{lem:topmin} with $G'[C]$, $(T, \bag\restriction_C)$, and the node $c$, and obtain a collection of at least $\flow_{G'[C]}(A' \cap C, B' \cap C)/g(f(H))$ vertex-disjoint pairwise anticomplete $(A' \cap C, B' \cap C)$-paths.
As the sum of $\flow_{G'[C]}(A' \cap C, B' \cap C)/g(f(H))$ over all components $C$ is $\flow_{G'}(A', B')/g(f(H)) \ge k$, we are done.
\end{proof}

\section{Graphs of bounded degeneracy}
In this section we prove \Cref{the:counterexample}, i.e., that for all integers $k$ and $g$, there exists a graph $G_{k,g}$ of degeneracy $2$ and girth at least $g$, and sets $A,B \subseteq V(G_{k,g})$, so that $A$ cannot be separated from $B$ by fewer than $k$ vertices, but there does not exist two vertex-disjoint anticomplete paths between $A$ and $B$.

\begin{figure}[htb]
\begin{center}
\includegraphics[width=0.65\textwidth]{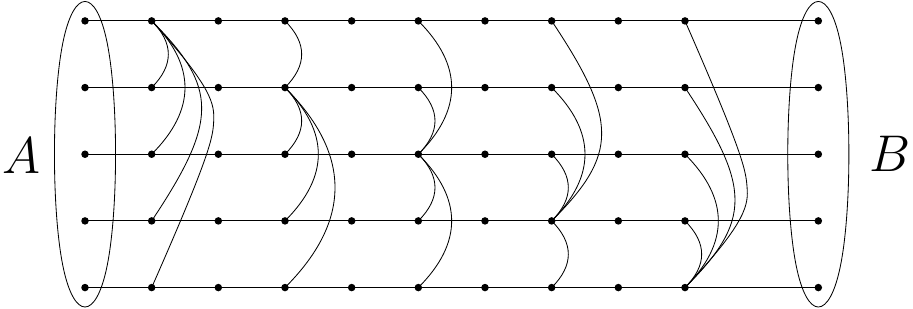}
\caption{The construction of the graph in the proof of \Cref{the:counterexample} for $k=5$ and $p=2$.}
\label{fig:example}
\end{center}
\end{figure}

\begin{proof}[Proof of \Cref{the:counterexample}.]
Let $p = \max(2, \lfloor g/2 \rfloor)$.
We next describe the construction of the graph $G_{k,g}$, see \Cref{fig:example} for an illustration of it.
The vertices of $G_{k,g}$ are $\{v_{i,j} \mid i \in [k], j \in [pk+1]\}$.
For the edges, first we add edges between $v_{i,j}$ and $v_{i,j+1}$ for every $i \in [k]$ and $j \in [pk]$, making the sequence $v_{i,1},\ldots,v_{i,pk+1}$ a path for each $i \in [k]$.
Then, for each $i \in [k]$, we make the vertex $v_{i,pi}$ adjacent to $v_{j,pi}$ for all $j \neq i$.
The set $A$ is $A = \{v_{i,1} \mid i \in [k]\}$ and the set $B$ is $B = \{v_{i,pk+1} \mid i \in [k]\}$.

The paths $v_{i,1},\ldots,v_{i,pk+1}$ for $i \in [k]$ verify that $A$ cannot be separated from $B$ by fewer than $k$ vertices.
Then, let $P$ be an $(A,B)$-path.
We argue that $N[P]$ is an $(A,B)$-separator, and therefore there is no other $(A,B)$-path that is vertex-disjoint and anticomplete with $P$.
First, we observe that the set $\{v_{i,pi} \mid i \in [k]\}$ is an $(A,B)$-separator, and therefore $P$ must contain at least one vertex $v_{i,pi}$ for some $i$.
However, we also have that for any $i \in [k]$, the set $N[v_{i,pi}]$ is an $(A,B)$-separator, and therefore $N[P]$ is an $(A,B)$-separator.

We can observe that after deleting vertices of degree at most two the graph becomes a forest, and therefore its degeneracy is at most two.
Also, the girth can be observed to be $2p+2$.
\end{proof}

\bibliographystyle{plain}
\bibliography{paper}

\begin{thebibliography}{10}

\bibitem{DBLP:journals/corr/abs-2305-04721}
Sandra Albrechtsen, Tony Huynh, Raphael~W. Jacobs, Paul Knappe, and Paul
  Wollan.
\newblock The induced two paths problem.
\newblock {\em Preprint, arXiv:2305.04721}, 2023.

\bibitem{DBLP:journals/dm/Bienstock91}
Daniel Bienstock.
\newblock On the complexity of testing for odd holes and induced odd paths.
\newblock {\em Discret. Math.}, 90(1):85--92, 1991.

\bibitem{DBLP:journals/siamdm/ErdeW19}
Joshua Erde and Daniel Wei{\ss}auer.
\newblock A short derivation of the structure theorem for graphs with excluded
  topological minors.
\newblock {\em {SIAM} J. Discret. Math.}, 33(3):1654--1661, 2019.

\bibitem{georgakopoulos2023graph}
Agelos Georgakopoulos and Panos Papasoglu.
\newblock Graph minors and metric spaces.
\newblock {\em Preprint, arXiv:2305.07456}, 2023.

\bibitem{DBLP:journals/siamcomp/GroheM15}
Martin Grohe and D{\'{a}}niel Marx.
\newblock Structure theorem and isomorphism test for graphs with excluded
  topological subgraphs.
\newblock {\em {SIAM} J. Comput.}, 44(1):114--159, 2015.

\bibitem{DBLP:journals/corr/abs-2309-07905}
Kevin Hendrey, Sergey Norin, Raphael Steiner, and Jérémie Turcotte.
\newblock On an induced version of menger's theorem.
\newblock {\em Preprint, arXiv:2309.07905}, 2023.

\bibitem{DBLP:journals/jctb/Korhonen23}
Tuukka Korhonen.
\newblock Grid induced minor theorem for graphs of small degree.
\newblock {\em J. Comb. Theory, Ser. {B}}, 160:206--214, 2023.

\bibitem{DBLP:journals/combinatorica/Kostochka84}
Alexandr~V. Kostochka.
\newblock Lower bound of the hadwiger number of graphs by their average degree.
\newblock {\em Comb.}, 4(4):307--316, 1984.

\bibitem{DBLP:journals/endm/LevequeLMT07}
Benjamin L{\'{e}}v{\^{e}}que, David~Y. Lin, Fr{\'{e}}d{\'{e}}ric Maffray, and
  Nicolas Trotignon.
\newblock Detecting induced subgraphs.
\newblock {\em Electron. Notes Discret. Math.}, 29:207--211, 2007.

\bibitem{menger1927allgemeinen}
Karl Menger.
\newblock Zur allgemeinen kurventheorie.
\newblock {\em Fundamenta Mathematicae}, 10:96--115, 1927.

\bibitem{thomason_1984}
Andrew Thomason.
\newblock An extremal function for contractions of graphs.
\newblock {\em Mathematical Proceedings of the Cambridge Philosophical
  Society}, 95(2):261–265, 1984.

\end{thebibliography}

\end{document}